\newtheorem{thm}{Theorem}[section]
\Crefname{thm}{Theorem}{Theorems}
\newtheorem{lem}[thm]{Lemma}
\Crefname{lem}{Lemma}{Lemmas}
\newtheorem{prop}[thm]{Proposition}
\Crefname{prop}{Proposition}{Propositions}
\newtheorem{cor}[thm]{Corollary}
\Crefname{cor}{Corollary}{Corollaries} 
\Crefname{que}{Question}{Questions}
\Crefname{con}{Conjecture}{Conjectures}
\Crefname{clm}{Claim}{Claims}
\Crefname{goal}{Goal}{Goals}
\theoremstyle{definition}
\newtheorem{defn}[thm]{Definition}
\Crefname{defn}{Definition}{Definitions}
\newtheorem{ex}[thm]{Example}
\Crefname{ex}{Example}{Examples}
\newtheorem{rem}[thm]{Remark}
\Crefname{rem}{Remark}{Remarks}
\tikzset{separrow/.pic = {
		\draw [thick,->] (-.2cm,-.15cm) -- (.2cm, -.15cm);
		\node at (0cm, -.4cm) {#1};},
	separrow inv/.pic = {
		\draw [thick,<-] (-.2cm,-.15cm) -- (.2cm, -.15cm);
		\node at (0cm, -.35cm) {#1};},
}
\DeclareMathOperator{\im}{im}
\newcommand{\Pcal}{\ensuremath{\mathcal{P}}}
\newcommand{\QP}{\ensuremath{\mathcal{Q}_P}}
\newcommand{\inv}{\mathord{^*}}
\colorlet{picturegreen}{green!60!black}
\begin{document}
	\title{Limit-closed Profiles}
	\author{Ann-Kathrin Elm}
	\address{Research conducted at Universität Hamburg, Department of Mathematics, Hamburg, Germany}
	\curraddr{Ann-Kathrin Elm: Universität Heidelberg, Department of Computer Science, Heidelberg, Germany}
	\email{elm@informatik.uni-heidelberg.de, hendrikheine@gmail.com}
	\author{Hendrik Heine}
	\subjclass[2020]{06-XX (Primary), 05C63, 05C05 (Secondary)}
	\keywords{infinite abstract separation system, tree set, tangle-tree theorem, profile}
	
\begin{abstract}
	Tangle-tree theorems are an important tool in structural graph theory, and abstract separation systems are a very general setting in which tangle-tree theorems can still be formulated and proven.
	For infinite abstract separation systems, so far tangle-tree theorems have only been shown for special cases of separation systems, in particular when the separation system arises from a (locally finite) infinite graph.
	We present a tangle-tree theorem for infinite separation systems where we do not place restrictions on the separation system itself but on the tangles to be arranged in a tree.
\end{abstract}
\maketitle

\section{Introduction}

Tangles were first introduced by Robertson and Seymour in \cite{GraphminorsX} as an obstruction to high tree-width. One of the important ingredients in their graph minor project was a tangle-tree theorem, that is a theorem giving a tree decomposition separating all the tangles of a graph.

As it turns out these features of tangles, both being an obstruction and admitting a tangle-tree theorem, can be formulated more abstractly.
This is the core of the theory of abstract separation systems formulated in \cite{ASS}.
In this setting tangle-tree theorems are now usually formulated more generally in terms of profiles instead of tangles and tree sets instead of tree decompositions, as seen for instance in \cite{ProfilesNew}.
While that article gives a very general such theorem for finite separation systems, for infinite separation systems there is no general tangle-tree theorem.
But there are several tangle-tree theorems for special cases of infinite separation systems:
for separation systems that come from a (locally finite) graph (see for example \cite{CanonicalTreesofTdc}), are the inverse limit of finite separation systems \cite{ProfiniteSepsys}, or in which every separation crosses only finitely many others \cite{ToTInfSepsys}.
In this article we consider a property not of the infinite separation system but of the profiles to be distinguished in order to obtain a tangle-tree theorem.
In particular we ask that the profiles to be distinguished are closed under taking limits (the formal definition follows in \cref{sec:equivclasses}).

In that same section we find, for a separation system and a set of regular closed profiles, a tree set distinguishing the profiles in a two step process.
Here, a regular profile is a profile that does not contain certain separations that behave very counter-intuitively.
First, we consider a set of equivalence classes of separations, show that these form a separation systems with special properties, and find a tree set of such equivalence classes.
Then we choose representatives of for these equivalence classes that form a tree set.
In \cref{sec:nonregular} we show that the regularity can be dropped from the requirements on the profiles by slightly adjusting the separation system such that it does not contain separations with counter-intuitive behaviour.
\Cref{sec:ttt} then applies the result from \cref{sec:equivclasses} inductively to get a full tree of tangles for a separation system with an order function.
The main point here is that the order function defines a chain of subsystems of the separation system, each with its own profiles, and the resulting tree of tangles distinguishes all profiles of the distinct subsystems at once.

\section{Preliminaries}

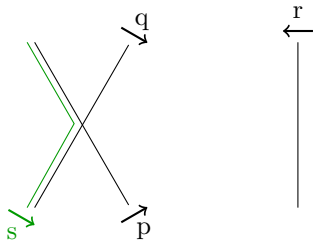
\begin{figure}
	\centering
	\begin{tikzpicture}
		\draw (-.5, 1.1) -- ++ (-60:2.5cm) pic [rotate = 30] {separrow = p};
		\draw (-.5, -1.1) -- ++(60:2.5cm) pic [rotate = 150] {separrow inv= q};
		\draw (3, -1.1) -- ++(90:2.2) pic [rotate=180] {separrow = r};
		\draw [picturegreen] (-.6, 1.1) -- ++(-60:1.25 cm) -- (-.6, -1.1) pic [rotate = -30] {separrow = s};
	\end{tikzpicture}
	\caption[A visualisation of nested and crossing separations, and of an infimum of two separations]{In this figure, separations are depicted as a line with a small arrow attached. This visualisation comes from the example of a separation system where the separations are subsets of some ground set $E$ and the partial order and involution are the subset relation and complement operation. For a separation $s$, i.e. a subset of $E$, the line then depicts the border between $s$ and its complement, and the arrow points towards the side which is the complement of $s$.\\
	So in this illustration, $p$ and $q$ cross and $p$ points towards $r$. Also $s$ is the infimum of $p$ and $q$ and thus would technically have to be depicted slightly more to the right, but then the illustration would be much harder to understand.
	This type of visualisation does not work for trivial and degenerate separations, as they cannot be properly represented as subsets of a ground set.}
	\label{fig:basicseps}
\end{figure}

We will assume the reader to be familiar with the basic definitions and facts about separation systems and (submodular) universes, which are explained in detail in \cite{ASS}.
In this paper, all order functions will only have values in the integers.
Also, we will only need the notion of oriented separations and not the one of unoriented separations.
So we will shorten the term ``oriented separation'' to just ``separation''.
In a submodular universe $(U,\leq, \inv)$, we will denote separations of order exactly $k$ as \emph{$k$-separations}, separations of order at most $k$ as \emph{$\leq k$-separations} and separations of order less than $k$ as \emph{$<k$-separations}.
As defined in \cite{SeparationsOfSets}, a homomorphism of separation systems $(S,\leq, \inv)$ and $(S',\leq', {\inv}')$ is a map $f:S\rightarrow S'$ such that for all separations $s$ and $t$ in $S$, $f(s^*)={f(s)^*}'$ and $s\leq t \Rightarrow f(s)\leq' f(t)$.

The following definitions of profiles can be found in \cite{ProfilesNew}, and the structurally submodular separation systems are defined for example in \cite{structurallysubmodular}.
Let $(U,\leq, \inv)$ be a universe of separations and $(S,\leq, \inv)$ a subsystem.
In this context, infima and suprema of elements of $S$ are always taken in the universe\footnote{The alternative would be to take them in the restriction of $\leq$ to $S$, where suprema and infima need not always exist.} and thus always exist, but need not be elements of $S$.
$S$ is \emph{structurally submodular} if for all separations $s$ and $t$ in $S$ at least one of $s\vee t$ and $s\wedge t$ is contained in $S$.
A \emph{profile of $S$} is a consistent orientation of $S$ such that for any elements $s$ and $t$ of the profile the separation $(s\vee t)^*$ is not contained in the profile.
In particular, if $s\vee t$ is contained in $S$ then it is also contained in the profile, and a separation system containing a degenerate separation does not have a profile.
Note that the definition of a profile thus not only depends on the separation system but also on the surrounding universe.
If $U$ is submodular, then a \emph{$k$-profile} of $U$ is a profile of $S_k$, the subsystem of $U$ consisting of the separations of order less than $k$.
A \emph{profile of $U$} is a $k$-profile of $U$ for some $k\in \mathbb{N}$, and $k$ is its \emph{order}.

Given a $k$-profile $P$ of $U$ and $l\in \mathbb{N}$ with $l\leq k$, the set $P\cap S_l$ is the $l$-profile \emph{induced} by $P$.
Two profiles $P$ and $Q$ of $U$ are \emph{distinguished} by a separation $s$ if $s\neq s^*$ and one of $s$ and $s^*$ is contained in $P$ and the other one in $Q$.
If $U$ is submodular and the order of $s$ is minimal among all separations distinguishing $P$ and $Q$, then $s$ distinguishes them \emph{efficiently}.
A set of separations $T$ distinguishes a set $\mathcal{P}$ of profiles of $U$ (efficiently) if for every pair of distinguishable profiles in $\mathcal{P}$ (that is, profiles for which there is a separation distinguishing them) there is a separation in $t$ distinguishing them (efficiently).
A profile $P$ of $U$ is \emph{robust} if for every $r\in P$ and every $l$-separation $s$ the following holds: If the orders of both $r^*\wedge s$ and $r^*\wedge s^*$ are less than the order of $r$, then $P$ does not contain both $r^*\wedge s$ and $r^*\wedge s^*$.
There are also other, slightly weaker definitions of robustness.
But that is not relevant, as robustness is never used directly, only the following consequence \cite[Lemma 3.6]{ProfilesNew}:
In a submodular universe, if $r$ efficiently distinguishes two robust profiles $P$ and $P'$ and $s$ efficiently distinguishes two robust profiles $Q$ and $Q'$ and the order of $r$ is less than the order of $s$, then one of $r\wedge s$, $r\wedge s^*$, $r^*\wedge s$ and $r^*\wedge s^*$ efficiently distinguishes $Q$ and $Q'$.

\section{Equivalence classes}\label{sec:equivclasses}

Let $S$ be a separation system in some universe $U$ and $\mathcal{P}$ a set of regular profiles of $S$. Then we can define a map $f_{\mathcal{P}}: S \rightarrow 2^{\mathcal{P}}$ via $f_{\mathcal{P}}(s)= \{P \in \mathcal{P}| s^* \in P\}$. Intuitively, the image of $f_{\mathcal{P}}$ condenses from $S$ the information how it distinguishes the elements of $\mathcal{P}$. We consider $2^\mathcal{P}$ as a separation system with inclusion as the order.

\begin{lem} \label{maphom}
	The map $f_{\mathcal{P}}$ is a homomorphism of separation systems which respects $\vee$ and $\wedge$.
\end{lem}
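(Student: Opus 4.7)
The plan is to verify the four defining conditions: that $f_{\mathcal{P}}$ commutes with the involutions, is monotone, and preserves $\vee$ and $\wedge$. On $2^{\mathcal{P}}$ with $\subseteq$ as its order, the involution is set complement in $\mathcal{P}$ and the lattice operations are union and intersection. Each of the three ingredients of a profile will play its own role: the orientation of $P$ handles the involution equation, consistency of $P$ handles monotonicity, and the profile axiom handles one direction of each of the $\vee$- and $\wedge$-equations.

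For the involution I would just unwind the definitions: the left-hand side of $f_{\mathcal{P}}(s^*) = f_{\mathcal{P}}(s)^*$ is $\{P\in\mathcal{P} : s \in P\}$ while the right-hand side is $\{P\in\mathcal{P} : s^* \notin P\}$, and these sets coincide because every $P$ is an orientation of $S$ and hence contains exactly one of $s$ and $s^*$.

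For monotonicity, given $s \leq t$ in $S$ and $s^* \in P$, the involution reverses order so $t^* \leq s^*$, and consistency of $P$ then forces $t^* \in P$; hence $f_{\mathcal{P}}(s) \subseteq f_{\mathcal{P}}(t)$. The preservation of $\vee$ now splits into two inclusions: monotonicity applied to $s, t \leq s \vee t$ yields $f_{\mathcal{P}}(s) \cup f_{\mathcal{P}}(t) \subseteq f_{\mathcal{P}}(s \vee t)$, while conversely, if $(s \vee t)^* \in P$ then the profile axiom rules out $\{s, t\} \subseteq P$, so $s^* \in P$ or $t^* \in P$.

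For $\wedge$ I would dualise using $(s \wedge t)^* = s^* \vee t^*$: monotonicity applied to $s \wedge t \leq s$ and $s \wedge t \leq t$ gives the inclusion $f_{\mathcal{P}}(s \wedge t) \subseteq f_{\mathcal{P}}(s) \cap f_{\mathcal{P}}(t)$; for the other direction, if $s^*, t^* \in P$ then the profile axiom forces $(s^* \vee t^*)^* = s \wedge t \notin P$, whence $s^* \vee t^* = (s \wedge t)^* \in P$ by orientation (noting that $s \wedge t \in S$ implies $s^* \vee t^* \in S$ because $S$ is closed under the involution). The only step with real content is the monotonicity, which genuinely appeals to consistency of $P$; the rest is a direct repackaging of the profile axiom and the orientation property.
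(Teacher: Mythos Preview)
Your proof is correct and follows essentially the same approach as the paper: involution from the orientation property, monotonicity from consistency, and the $\vee$-identity from the profile axiom together with consistency/monotonicity. The only differences are cosmetic --- you argue monotonicity via downward closure of $P$ rather than directly excluding $t\in P$, and you spell out the $\wedge$-case by duality whereas the paper leaves it implicit.
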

\begin{proof}
	Since profiles orient every separation, $f_{\mathcal{P}}$ respects the involution. Furthermore if $s \leq t$ and $s^* \in P$ for some profile $P$, then we cannot have $t \in P$ by consistency, so $f_{\mathcal{P}}(s) \leq f_{\mathcal{P}}(t)$.
	Now it suffices to prove that for $s,t \in S$ with $s \vee t \in S$ an arbitrary $P \in \mathcal{P}$ contains $s \vee t$ if and only if it contains both $s$ and $t$. The forward implication follows by consistency, the backwards one by the profile property.
\end{proof}

The fibers of $f_{\mathcal{P}}$ are exactly the equivalence classes obtained by regarding two separations as equivalent if they are oriented the same way by every profile in $\mathcal{P}$. Assuming a structurally submodular $S$, comparing these equivalence classes via their images under $f_\mathcal{P}$ gives the same partial order as comparing them via their elements.  In fact, a slightly weaker condition suffices. Call $S$ \emph{weakly $\mathcal{P}$-submodular (with respect to $\mathcal{P})$} if whenever $f_{\mathcal{P}}(s) \leq f_{\mathcal{P}}(t)$ at least one of $s \vee t$ and $s \wedge t$ is contained in $S$.
\begin{prop} \label{leqequiv}
	Let $S$ be weakly $\mathcal{P}$-submodular with respect to $\mathcal{P}$.
	Then for $A,B \in \im(f_{\mathcal{P}})$ we have $A \leq B$ if and only if there are $a \in f_{\mathcal{P}}^{-1}(A), b \in f_{\mathcal{P}}^{-1}(B)$ with $a \leq b$.
\end{prop}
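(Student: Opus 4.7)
The backward direction follows immediately from \cref{maphom}: if $a \leq b$, then $f_{\mathcal{P}}(a) \leq f_{\mathcal{P}}(b)$, i.e.\ $A \leq B$. So the content is entirely in the forward direction.

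For the forward direction, I would pick arbitrary representatives $a_0 \in f_{\mathcal{P}}^{-1}(A)$ and $b_0 \in f_{\mathcal{P}}^{-1}(B)$. Since $f_{\mathcal{P}}(a_0) = A \leq B = f_{\mathcal{P}}(b_0)$, the weak $\mathcal{P}$-submodularity hypothesis gives that at least one of $a_0 \vee b_0$ and $a_0 \wedge b_0$ lies in $S$. The plan is to replace one of $a_0, b_0$ by this join or meet to obtain a comparable pair with the same images.

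In the join case, $a_0 \vee b_0 \in S$, so by \cref{maphom} (the $\vee$-preservation, applied as in its proof through the complement on $2^{\mathcal{P}}$) we get $f_{\mathcal{P}}(a_0 \vee b_0) = f_{\mathcal{P}}(a_0) \cup f_{\mathcal{P}}(b_0) = A \cup B = B$, since $A \leq B$ means $A \subseteq B$. Then $a := a_0$ and $b := a_0 \vee b_0$ satisfy $a \leq b$ with $f_{\mathcal{P}}(a) = A$ and $f_{\mathcal{P}}(b) = B$. The meet case is dual: if $a_0 \wedge b_0 \in S$, then $(a_0 \wedge b_0)^* = a_0^* \vee b_0^* \in S$ as well, so applying the same $\vee$-preservation to $a_0^*, b_0^*$ and using that $f_{\mathcal{P}}$ respects involution (where involution on $2^{\mathcal{P}}$ is set complement) yields $f_{\mathcal{P}}(a_0 \wedge b_0) = f_{\mathcal{P}}(a_0) \cap f_{\mathcal{P}}(b_0) = A \cap B = A$. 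Then $a := a_0 \wedge b_0$ and $b := b_0$ work.

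There is no real obstacle here beyond unpacking what the ``respects $\vee$ and $\wedge$'' clause of \cref{maphom} actually gives; the one subtle point worth stating carefully is why $f_{\mathcal{P}}(a_0 \vee b_0) = A \cup B$ collapses to $B$ (respectively $f_{\mathcal{P}}(a_0 \wedge b_0) = A \cap B$ collapses to $A$), which uses precisely the assumed inclusion $A \leq B$ in $2^{\mathcal{P}}$. Everything else is a direct application of the preceding lemma together with the definition of weak $\mathcal{P}$-submodularity.
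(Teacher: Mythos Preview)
Your proof is correct and follows exactly the same strategy as the paper's: pick arbitrary preimages, invoke weak $\mathcal{P}$-submodularity to get one corner in $S$, and use \cref{maphom} to see that this corner lands in the appropriate fiber, producing a comparable pair. In fact your write-up is slightly cleaner than the paper's, which appears to contain a typo (it places $a\wedge b$ in $f_{\mathcal{P}}^{-1}(B)$ and $a\vee b$ in $f_{\mathcal{P}}^{-1}(A)$, whereas---as you correctly compute---it is the other way around since $A\cap B=A$ and $A\cup B=B$).
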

\begin{proof}
	The backward direction is immediate by \cref{maphom}. For the forward direction choose $a \in f_{\mathcal{P}}^{-1}(A), b \in f_{\mathcal{P}}^{-1}(B)$. At least one of $a \wedge b$ and $a \vee b$ is contained in $S$ by weak submodularity.  If $a \wedge b \in S$, then by \cref{maphom} it is contained in  $f_{\mathcal{P}}^{-1}(B)$ and thus forms the required pair together with $b$. Similarly, if $a \vee b \in S$, then $a \vee b \in f_{\mathcal{P}}^{-1}(A)$ and it forms the required pair with $a$.
\end{proof}

We want to use the transformation $f_{\mathcal{P}}$ to find a distinguishing set for $\mathcal{P}$. We will proceed in two steps: First we will look for an abstract distinguishing set in the image and then look for separations of $S$ to represent them.

Let us start by stating our objective for the first step more formally. We will say that $A \in \im(f_{\mathcal{P}})$ \emph{separates} $P,Q \in \mathcal{P}$ if $P \in A$ and $Q \notin A$ or vice versa. Then we are looking for some tree set $T \subseteq \im(f_{\mathcal{P}})$ such that different elements of $\mathcal{P}$ are always separated by some element of $\im(f_{\mathcal{P}})$. Since structural submodularity of $S$ translates to $\im(f_{\mathcal{P}})$, for finite $\mathcal{P}$ standard techniques easily show that this condition is enough to reach our goal. If $\mathcal{P}$ is infinite these standard methods are not sufficient, but there is a useful idea, seen for instance in \cite{Mat2seps} or \cite{CunninghamEdmonds}, which may help. That idea is taking only the \emph{good} separations, that is those not crossed by any other separation, for our tree set.

In the following we will show that, given certain conditions, the set $T(S,\mathcal{P})$ of good separations of $\im(f_{\mathcal{P}})$ (except $\emptyset$ and $\mathcal{P}$) does indeed meet our demands. 
Since $T(S,\mathcal{P})$ is a tree set by definition, all that needs to be shown is that $T(S,\mathcal{P})$ separates any two distinct elements of $\mathcal{P}$. When dealing with finite separation systems, it is sometimes useful to consider maximal separations. If we want to use this trick in the infinite case, we encounter some difficulties. First of all, profiles may not even have maximal elements, which would render our strategy impossible. Thus we require our profiles to be \emph{closed}, meaning that any chain in the profile has a supremum in the universe which is contained in the profile. This ensures that each profile has a maximal element, but even these maximal elements may not have the nice properties which we are used to, say when we have an order function.

Thus we need one more condition, which emulates some of the additional structure provided by an order function. We call $S$ \emph{orderly} (with respect to $\mathcal{P}$) if for any $s,t \in S$ such that both $\{s,t\}$ and $\{s^*,t^*\}$ are subsets of (possibly different) elements of $\mathcal{P}$ we have $s \vee t \in S$ and $s \wedge t \in S$. 
To give an example, it will follow from \cref{orderly} that the set of proper $k$-separations in a $k$-connected graph is orderly.

\begin{lem}\label{separatedbygoodseps}
	If $S$ is orderly and every $P \in \mathcal{P}$ is closed, all different $P,Q \in \mathcal{P}$ are separated by some $t \in T(S,\mathcal{P})$.
\end{lem}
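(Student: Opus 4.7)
The plan is to find, via Zorn, a maximal element $m$ of
\[
X := \{s \in S : s \in P \text{ and } s^* \in Q\}
\]
and then show that $A := f_{\mathcal{P}}(m)$ is good, so that $A \in T(S,\mathcal{P})$ separates $P$ from $Q$.

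First I would verify that $X$ is nonempty: since $P \neq Q$ are both orientations of $S$, some $s \in S$ is oriented differently by them, and after possibly swapping $P$ and $Q$ we may assume $s \in X$. Then I would apply Zorn's Lemma to $(X,\leq)$. Given a chain $(s_i)$ in $X$, closedness of $P$ supplies a supremum $s = \sup s_i$ inside $P$ (hence in $S$), and \cref{maphom} applied to $s_i \leq s$ yields $f_{\mathcal{P}}(s_i) \subseteq f_{\mathcal{P}}(s)$; combined with $Q \in f_{\mathcal{P}}(s_i)$, this gives $Q \in f_{\mathcal{P}}(s)$, i.e., $s^* \in Q$. Hence $s \in X$, and Zorn yields a maximal $m \in X$.

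For the main step, $A = f_{\mathcal{P}}(m)$ already separates $P$ and $Q$ nontrivially by construction ($Q \in A$ and $P \notin A$, so $A \notin \{\emptyset, \mathcal{P}\}$). Suppose toward a contradiction that $A$ is crossed by some $B = f_{\mathcal{P}}(t)$. Let $\tilde t \in \{t, t^*\}$ be the orientation of $t$ that lies in $P$, so $\{m,\tilde t\} \subseteq P$; the crossing of $A$ and $B$ supplies a profile $R \in \mathcal{P}$ with $\{m^*,\tilde t^*\} \subseteq R$ (taken from whichever of $A \cap B$ or $A \cap B^*$ corresponds to $\tilde t$). Orderliness then gives $m \vee \tilde t \in S$, and the profile property of $P$ places $m \vee \tilde t$ in $P$. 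If $m \vee \tilde t$ were also in $Q$, then $m^*$ and $m \vee \tilde t$ would both lie in $Q$ with $m < m \vee \tilde t$, violating the consistency of $Q$; so $(m \vee \tilde t)^* \in Q$, which gives $m \vee \tilde t \in X$. The strict inequality $m < m \vee \tilde t$ holds because $\tilde t \leq m$ would translate, via \cref{maphom}, into $B$ or $B^*$ being contained in $A$, contradicting the crossing. This contradicts the maximality of $m$, so $A$ is good.

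The main obstacle is establishing the Zorn hypothesis. It is tempting to try to derive $s^* \in Q$ from $s_i^* \in Q$ by invoking closedness of $Q$, but closedness only supplies suprema of chains, whereas here $s^*$ is the \emph{infimum} of the chain $(s_i^*)$. The trick is to lift the question into $\im(f_{\mathcal{P}})$ and use the monotonicity from \cref{maphom}, which makes $Q \in f_{\mathcal{P}}(s)$ immediate from $Q \in f_{\mathcal{P}}(s_i)$.
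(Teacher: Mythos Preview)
Your proof is correct and follows essentially the same approach as the paper's: find a maximal separation in $P \setminus Q$ using closedness of $P$, then use orderliness plus the profile property to rule out any crossing. The paper phrases the first step as taking a maximal \emph{chain} in $P \setminus Q$ and then its supremum (which lands back in $P \setminus Q$ by closedness and consistency), rather than invoking Zorn directly on elements, but this is the same argument. Your ``trick'' of passing through $f_{\mathcal{P}}$ to get $s^* \in Q$ is exactly the consistency step the paper uses verbatim (cf.\ the proof of \cref{maphom}), so there is no genuine difference in method, only in presentation.
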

\begin{proof}
	Let $C$ be a chain in $S$ such that each element of $C$ is contained in $P$ and not in $Q$ which is maximal with these properties. Since $P$ is closed, $C$ has a supremum $s$, which is contained in $P$. Furthermore, by consistency, we have $s \notin Q$. Now it suffices to prove that $f_\mathcal{P}(s)$ is good. If not, there exists some $x \in S$ such that the images of $s$ and $x$ cross, without loss of generality $x \in P$. But since $S$ is orderly, this would imply $s \vee x \in S$ and hence $s \vee x \in P$ and this separation could have been added to $C$.
\end{proof}
Now let us consider the second step. Starting with a regular tree set $T$ in $\im(f_{\mathcal{P}})$, we want to find a nested set of preimages (or equivalently one isomorphic to $T$). Once again we want to use maximal separations and so need the same conditions as before. Closure guarantees that each equivalence class has a maximal separation and orderliness that it is even greatest.
\begin{lem}\label{biggestequivalent}
	If every $P \in \mathcal{P}$ is closed, for every $t \in T$ the set $f_\mathcal{P}^{-1}(t)$ has a greatest element.
\end{lem}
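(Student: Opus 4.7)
The plan is to apply Zorn's lemma to the poset $f_\mathcal{P}^{-1}(t)$, using closure of the profiles to make the poset chain-complete and orderliness of $S$---a hypothesis implicit in the statement but identified explicitly in the paragraph preceding it---to upgrade the resulting maximal element to a greatest element.

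For chain-completeness, take a chain $C \subseteq f_\mathcal{P}^{-1}(t)$. Since $T$ is a regular tree set, $t \neq \emptyset, \mathcal{P}$, so fix some $P_0 \in \mathcal{P} \setminus t$. For every $s \in C$ the condition $f_\mathcal{P}(s) = t \not\ni P_0$ forces $s^* \notin P_0$ and hence $s \in P_0$; the same reasoning for any $P \notin t$ places $C \subseteq P$. Closure of $P_0$ then yields $s_C := \bigvee C \in P_0 \subseteq S$, and closure of every $P \notin t$ gives $s_C \in P$ as well, so $s_C^* \notin P$ and $P \notin f_\mathcal{P}(s_C)$. For the opposite inclusion, monotonicity (\cref{maphom}) gives $f_\mathcal{P}(s_C) \supseteq f_\mathcal{P}(r) = t$ for any $r \in C$. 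Hence $f_\mathcal{P}(s_C) = t$, i.e.\ $s_C \in f_\mathcal{P}^{-1}(t)$, and Zorn's lemma supplies a maximal element $m$.

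To show $m$ is the greatest, take any $x \in f_\mathcal{P}^{-1}(t)$. Regularity of $t$ makes both $t$ and $\mathcal{P} \setminus t$ nonempty, so $\{m^*, x^*\}$ lies in every profile in $t$ while $\{m, x\}$ lies in every profile outside $t$. Orderliness of $S$ then forces $m \vee x \in S$, and \cref{maphom} identifies its image as $f_\mathcal{P}(m) \vee f_\mathcal{P}(x) = t$; hence $m \vee x \in f_\mathcal{P}^{-1}(t)$. Since $m \vee x \geq m$, maximality of $m$ gives $m \vee x = m$, i.e.\ $x \leq m$.

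The main subtlety is the verification that $f_\mathcal{P}(s_C) = t$: one inclusion is immediate from monotonicity, but the other requires re-invoking closure for every profile outside $t$, not just for the single $P_0$ used to land $s_C$ in $S$. Orderliness plays no role in producing a maximal element but is indispensable at the final bootstrapping step, since without a way to combine two fiber elements inside $S$ they may simply be incomparable.
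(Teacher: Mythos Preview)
Your proof is correct and follows essentially the same approach as the paper: both use closure of the profiles in $\mathcal{P}\setminus t$ to show that chains in $f_\mathcal{P}^{-1}(t)$ have suprema in $f_\mathcal{P}^{-1}(t)$, obtain a maximal element via Zorn, and then use orderliness to force $m\vee x=m$ for any other $x$ in the fiber. The only cosmetic difference is that the paper phrases the last step as ``any two maximal elements coincide'' rather than ``the maximal element dominates every element,'' but these are equivalent once chain-completeness is known.
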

\begin{proof}
	Let $X = f_\mathcal{P}^{-1}(t)$ and let $C$ be a nonempty chain in $X$. Since $\mathcal{P} \setminus t$ is nonempty and each profile in that set is closed, $C$ has a supremum $s$ with $f_\mathcal{P}(s)\supseteq t$. Conversely there cannot be any $Q \in t$ with $s \in Q$, since we would then have $C \subseteq Q$ by consistency, a contradiction. Thus $s \in X$.
	This implies that any element of $X$ lies below some maximal element. 

	Now let $a$ and $b$ be two maximal elements of $X$. Since $S$ is orderly, $a \vee b \in S$ and then also $a \vee b \in X$. By maximality of $a$ and $b$ these must then both be equal to $a \vee b$ and we have $a = b$. Thus $X$ only has a single maximal element which must then be greater than all other separations in $X$.
\end{proof}
Let $m$ be the function mapping each $t \in T$ to the greatest element of $f_\mathcal{P}^{-1}(t)$.
We would like to use $m$ to choose the representatives, however, this is not quite possible, since for $t \in T$ the separations $m(t)$ and $m(t^*)$ are usually not inverses.
Fortunately, this is no great obstacle. If we simply choose one of these two possible unoriented separations, the only thing that could go wrong is that for $s \leq t$ we choose $m(s)$ and $m(t^*)^*$ as the representing separations. It thus suffices to choose exactly opposite to a consistent orientation, which always exists.
So we fix a consistent orientation $o$ of $T$ and define the function $m_o$ by mapping $s \in T$ to $m(s^*)^*$ for $s \in o$ and to $m(s)$ for $s \notin o$. Let $\hat{T}$ be the image of $T$ under $m_o$.
\begin{cor}
	Let $S$ be orderly and every $P \in \mathcal{P}$ closed. Then $f_\mathcal{P}$ restricts to an isomorphism between $\hat{T}$ and $T$.
\end{cor}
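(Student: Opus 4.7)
The plan is to establish three things: that $m_o$ is a two-sided inverse of $f_\mathcal{P}$ restricted to $\hat{T}$; that $m_o$ respects the involution; and that $m_o$ is order-preserving in both directions. Combined with \cref{maphom} this makes the restriction an isomorphism $\hat{T} \to T$.

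A direct computation shows $f_\mathcal{P}(m_o(s)) = s$ for every $s \in T$: when $s \notin o$ this is immediate from $m_o(s) = m(s) \in f_\mathcal{P}^{-1}(s)$, and when $s \in o$ one applies \cref{maphom} to obtain $f_\mathcal{P}(m(s^*)^*) = (s^*)^* = s$. This yields both that $m_o$ is injective and that $f_\mathcal{P}$ restricts to a bijection $\hat{T} \to T$. The identity $m_o(s^*) = m_o(s)^*$ is a short case check on whether $s$ or $s^*$ lies in $o$, and $f_\mathcal{P}$ already respects the involution by \cref{maphom}.

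The main work lies in showing that $s \leq t$ in $T$ implies $m_o(s) \leq m_o(t)$; the reverse direction is immediate from \cref{maphom} applied to $f_\mathcal{P}$. Since $T$ is regular, $s$ is nonempty and $t$ is a proper subset of $\mathcal{P}$. Choosing $P \in s \subseteq t$ places both $m_o(s)^*$ and $m_o(t)^*$ in $P$, and choosing $P' \in t^* \subseteq s^*$ places both $m_o(s)$ and $m_o(t)$ in $P'$. Orderliness then gives $m_o(s) \vee m_o(t) \in S$ and $m_o(s) \wedge m_o(t) \in S$, with $f_\mathcal{P}$-images $s \vee t = t$ and $s \wedge t = s$ respectively. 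If $t \notin o$, then $m_o(t) = m(t)$ is the greatest element of $f_\mathcal{P}^{-1}(t)$, so $m_o(s) \vee m_o(t) \leq m_o(t)$ and hence $m_o(s) \leq m_o(t)$. Symmetrically, if $s \in o$, then $m_o(s)^* = m(s^*)$ is greatest in $f_\mathcal{P}^{-1}(s^*)$, so $m_o(s)^* \vee m_o(t)^* \leq m_o(s)^*$, again yielding $m_o(s) \leq m_o(t)$.

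The subtle remaining case $s \notin o$, $t \in o$ is precisely the configuration that motivated defining $m_o$ opposite to a consistent orientation, and I expect consistency of $o$ to rule it out. In this case $s^*$ and $t$ both lie in $o$, and $s \leq t$ rewrites as $(s^*)^* \leq t$; regularity of $T$ prevents $s = t^*$ (which would give $s \leq s^*$, making $s$ small), so $s^*$ and $t$ are distinct elements of $o$ violating consistency. Hence the order is preserved in both directions, and the restriction of $f_\mathcal{P}$ to $\hat{T}$ is the claimed isomorphism.
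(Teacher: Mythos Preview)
Your proof is correct and follows essentially the same approach as the paper's: both show that $m_o$ is inverse to $f_\mathcal{P}|_{\hat T}$ and then verify that $m_o$ preserves the order by using orderliness to form a corner and the maximality of $m(\cdot)$ to bound it. Your case split ($t\notin o$; $s\in o$; the residual case $s\notin o,\ t\in o$ ruled out by consistency) is just a reorganisation of the paper's two cases ($t\notin o$; $t\in o$, whence $s\in o$ by consistency), and you make explicit the appeal to regularity of $T$ needed to invoke orderliness, which the paper leaves implicit.
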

\begin{proof}
	Clearly, $m_o$ is the inverse map of $f_\mathcal{P}$ restricted to the image of $m_o$. Since $f_\mathcal{P}$ is a homomorphism by \cref{maphom}, it suffices to show that $m_o$ is, too. So let $s,t \in T$ be such that $s \leq t$. We need to show that $m_o(s) \leq m_o(t)$. If $t \notin o$, we have $m_o(t) = m(t)$. Since $S$ is orderly, we have $m(t) \vee m_o(s) \in S$. By \cref{maphom} $f_\mathcal{P}(m(t) \vee m_o(s)) = f_\mathcal{P}(m(t)) \cup f_\mathcal{P}(m_o(s)) = t \cup s=t$. But since $m(t)$ is a greatest element, we must have $m_o(s) \leq m(t) = m_o(t)$. So we may assume $t \in o$ and by consistency also $s \in o$. Then we have $m_o(s)=m(s^*)^*$ and $m_o(t)=m(t^*)^*$. Since $S$ is orderly, we have $m(s^*) \vee m(t^*) \in S$ and calculating with \cref{maphom} as before we get $f_\mathcal{P}(m(s^*) \vee m(t^*)) = s^*$. Again by choice of $m(s^*)$ we must have $m(s^*) \geq m(t^*)$ and thus $m_o(s) \leq m_o(t)$.
\end{proof}
This completes the second step. Overall, we have now proven the following theorem.
\begin{thm} \label{abstracttheorem}
	Let $S$ be a regular separation system orderly with respect to a set $\mathcal{P}$ of closed profiles. Then there is a tree set $T$ with the following properties:
	\begin{enumerate}
		\item{Any two different elements $\mathcal{P}$ are distinguished by some element of $T$.}
		\item{Any element of $T$ distinguishes some elements of $\mathcal{P}$.}
		\item{In the set $P \cap T$ for every $P \in \mathcal{P}$ every separation lies below some maximal separation.}
	\end{enumerate}
\end{thm}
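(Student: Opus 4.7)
The plan is to set $T := m_o(T(S,\mathcal{P}))$ for some fixed consistent orientation $o$ of $T(S,\mathcal{P})$, which exists because $T(S,\mathcal{P})$ is a tree set. By the preceding corollary, $T$ is itself a tree set, isomorphic to $T(S,\mathcal{P})$ via $f_\mathcal{P}$. Properties (1) and (2) are then immediate consequences of this isomorphism: for distinct $P,Q\in\mathcal{P}$, \cref{separatedbygoodseps} supplies some $A\in T(S,\mathcal{P})$ separating them, so $m_o(A)\in T$ distinguishes $P$ and $Q$; conversely each $t=m_o(A)\in T$ corresponds to an $A$ that is by construction neither $\emptyset$ nor $\mathcal{P}$, so $t$ distinguishes the two profiles witnessing this.

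For property (3), the plan is to apply Zorn's Lemma to $P\cap T$, so the task reduces to producing an upper bound in $P\cap T$ for each chain. Given a chain $C\subseteq P\cap T$, I first extend $C$ (by a Zorn application on chains of $P$) to a chain $C'\subseteq P$ that is maximal among chains in $P$. Closedness of $P$ gives $s:=\sup C'\in P$, and the maximality of $C'$ forces $s$ to be a maximal element of $P$, since any strictly larger $s'\in P$ could be appended to $C'$. The candidate upper bound will be $m_o(f_\mathcal{P}(s))$: once $f_\mathcal{P}(s)\in T(S,\mathcal{P})$ is established, \cref{maphom} together with the homomorphism property of $m_o$ from the preceding corollary give $c=m_o(f_\mathcal{P}(c))\leq m_o(f_\mathcal{P}(s))$ for every $c\in C$, while $s\in P$ translates via the definition of $f_\mathcal{P}$ into $m_o(f_\mathcal{P}(s))\in P$.

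The main obstacle is showing that $f_\mathcal{P}(s)$ is good; the conditions $f_\mathcal{P}(s)\neq\emptyset$ (from $\emptyset\neq f_\mathcal{P}(c)\subseteq f_\mathcal{P}(s)$ for any $c\in C$) and $f_\mathcal{P}(s)\neq\mathcal{P}$ (from $s\in P$) are routine. For goodness I plan to adapt the argument of \cref{separatedbygoodseps}: if $f_\mathcal{P}(s)$ were crossed by some $f_\mathcal{P}(x)\in\im(f_\mathcal{P})$, then nonemptiness of $f_\mathcal{P}(s)\cap f_\mathcal{P}(x)$ and of $\mathcal{P}\setminus(f_\mathcal{P}(s)\cup f_\mathcal{P}(x))$ would supply profiles containing $\{s^*,x^*\}$ and $\{s,x\}$, respectively, so orderliness of $S$ would yield $s\vee x\in S$. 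After replacing $x$ by $x^*$ if necessary to ensure $x\in P$ (which preserves the crossing), the profile property of $P$ would force $s\vee x\in P$; but maximality of $s$ in $P$ would then force $s\vee x=s$, i.e.\ $x\leq s$, contradicting the crossing.
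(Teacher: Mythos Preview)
Your proposal is correct and follows the paper's construction exactly: take $T=\hat T=m_o(T(S,\mathcal P))$ and read off (1) and (2) from the isomorphism of the preceding corollary together with \cref{separatedbygoodseps}. For (3) the paper gives no explicit argument beyond ``we have now proven the following theorem''; your Zorn argument, producing a maximal $s\in P$ and showing $f_{\mathcal P}(s)$ is good by the same orderliness trick as in the proof of \cref{separatedbygoodseps}, is the natural way to supply this detail and is in the spirit of that lemma.
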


\section{Non-regular profiles}\label{sec:nonregular}
In this section we want to show that in \cref{abstracttheorem}, the regularity condition can be dropped.
In order to do so, we will slightly adjust $S$ and $\mathcal{P}$ to a regular separation system with a modified set of profiles.
Then the modified profiles are necessarily regular, and theory for regular profiles can be applied to them and the adjusted separation system.
As a last step we will show that any tree set distinguishing the modified profiles also distinguishes $\mathcal{P}$.

There is a procedure to make a separation system regular in \cite{ASS}: first taking its \emph{essential core}, i.e.\ deleting all trivial, co-trivial and degenerate elements, and then taking the \emph{regularization} of that essential core, that is, dropping relations of the form $s\leq s^*$ from the partial order.
This two-step-procedure is necessary because dropping relations of the form $s\leq s^*$ from the partial order need not yield another partial order if there are degenerate or trivial elements present.
But if $S$ is a subsystem of a surrounding universe, it is in general not possible to just delete trivial elements from the universe, and then it is not possible to adjust the surrounding universe to also be a surrounding universe for the regularization.
In order to overcome this problem, we will have to relax the notion of corners in a separation system once more so we can cope without a surrounding universe.

\begin{defn}
	Let $(S, \mathord{^*}, \leq)$ be a separation system.
	A \emph{corner map} is a map $\vee$ from a subset of $S\times S$ to $S$ such that
	\begin{itemize}
		\item if $s\leq t$, then $\vee(s,t)$ is defined, and
		\item if $\vee(s,t)$ is defined, then also $\vee(t,s)$ is defined and it is the supremum of $s$ and $t$ in the partial order $\leq$.
	\end{itemize}
	For a separation system with corner map, in analogy to a separation system that is a subsystem of a universe, we say that a corner $s\vee t$ \emph{exists} in $S$ or \emph{is contained in $S$} if $\vee(s,t)$ is defined.
	Also, $\vee(s,t)$ will be denoted by $s\vee t$, and $s\wedge t$ is a shorthand for $(s^*\vee t	^*)^*$.
\end{defn}

\begin{ex}
	If $S$ is a separation system that is a subsystem of some universe $U$, then $S$ naturally comes with a corner map where $\vee(s,t)$ is defined if $s \vee t$, which exists in the surrounding universe, is contained in $S$.
	Another example of a corner map, which can be defined without taking a surrounding universe into account, is to define $\vee(s,t)$ to exist whenever $s$ and $t$ have a supremum in the partial order $\leq$.
	As every separation system can be embedded into a universe in a way that preserves suprema of $\leq$ (see \cite[Theorem 3.1]{EKTStructureSubmod}), this example is actually a special case of the previous example for a corner map.
	As a third example, one can define a corner map where $\vee(s,t)$ is defined only when $s\leq t$ or $t \leq s$.
\end{ex}

Formally, we have to redefine our terminology for separation systems with corner map.
But of course all these definitions will be the same as before and just as expected.

\begin{defn}
	Let $(S,\leq, \mathord{^*}, \vee)$ be a separation system with a corner map.
	A \emph{profile} of the separation system with corner map is a consistent orientation $P$ of $(S,\leq,\mathord{^*})$ such that for all $s,t$ in $P$, if $s\vee t$ is defined then $(s \vee t)^*$ is not contained in $P$.
	Given a set of profiles $\mathcal{P}$, the separation system with corner map is \emph{orderly} with respect to $\mathcal{P}$ if for all separations $s$ and $t$ such that some profile in $\mathcal{P}$ contains $s$ and $t$ and some profile in $\mathcal{P}$ contains $s^*$ and $t^*$ then both $s\vee t$ and $s^*\vee t^*$ are defined.
	A profile $P$ is \emph{closed} in $S$ if for all $\leq$-chains in $P$ the supremum with respect to $\leq$ exists in $S$ and is contained in $P$.
\end{defn}

Note that if $S$ is a separation system that is a subsystem of a universe $U$, then a consistent orientation of $S$ is a profile of $S \subseteq U$ if and only if it is a profile of $S$ with the induced corner map, and a profile of $S$ that is closed with respect to $S$ as a subsystem of $U$ is also closed in $S$ itself.
Furthermore, if $\mathcal{P}$ is a set of profiles of $S$, then $S \subseteq U$ is orderly with respect to $\mathcal{P}$ if and only if $S$ with the induced corner map is orderly with respect to $\mathcal{P}$.

Now we want to apply regularization to a separation system with corner map and a set of profiles.
Through this process we can essentially keep the corner map and the set of profiles, and we preserve orderliness and closedness.

\begin{defn}
	Let $(S,\leq, \mathord{^*}, \vee)$ be a separation system with a corner map.
	The \emph{essential core} of $(S,\leq, \mathord{^*}, \vee)$ is the set $S'$ of separations in $S$ that are neither degenerate nor trivial nor co-trivial, together with the restrictions of $\leq$, $\mathord{^*}$ and $\vee$ to $S'$.
	The \emph{regularization} of the essential core is the tuple $(S', \leq', \mathord{^*}', \vee')$ where $s\leq' t$ for elements $s$ and $t$ of $S'$ if and only if $s\leq t$ and $s\neq t^*$, $\mathord{^*}'$ is the restriction of $\mathord{^*}$ to $S'$ and $\vee'$ is defined for all those elements $s$ and $t$ of $S'$ for which $s\vee t$ is defined, contained in $S'$, and the supremum of $s$ and $t$ in $\leq'$.
\end{defn}

\begin{lem}\label{regularization}
	Let $\mathcal{P}$ be a set of profiles of a separation system with corner map $(S,\leq, \mathord{^*}, \vee)$.
	Then the regularization $(S',\leq',\mathord{^*}', \vee')$ of the essential core is a regular separation system with corner map of which $\mathcal{P}':=\{P\cap S' \colon P\in \mathcal{P}\}$ is a set of profiles.
	Moreover, if $S$ is orderly with respect to $\mathcal{P}$, then $S'$ is orderly with respect to $\mathcal{P}'$, and if the elements of $\mathcal{P}$ are closed with respect to $\leq$ then the elements of $\mathcal{P}'$ are closed with respect to $\leq'$.
\end{lem}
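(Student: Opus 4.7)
The plan is to verify each of the four claims—$(S', \leq', *', \vee')$ is a regular separation system with corner map, $\mathcal{P}'$ is a set of profiles, orderliness is preserved, closedness is preserved—in turn. The unifying principle is that removing degenerate, trivial, and co-trivial separations from $S$ blocks exactly the pathological coincidences that could otherwise make the regularised relations or corners misbehave, so each verification boils down to showing that an apparent obstruction would impose one of these forbidden properties on an element of $S'$.

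I would begin with the structural checks. Reflexivity of $\leq'$ holds because $s \neq s^*$ for every $s \in S'$; antisymmetry is inherited from $\leq$; for transitivity I would rule out $s = r^*$ whenever $s \leq' t \leq' r$, since $s = r^*$ forces $s \leq t$ and $s \leq t^*$ (the latter from $t \leq r = s^*$), making $s$ trivial with witness $t$ unless $t \in \{s, s^*\}$, each of which is ruled out by the defining inequalities of $\leq'$. Regularity of $S'$ is immediate from the definition of $\leq'$, and showing $\vee'$ is a corner map reduces to confirming that $s \leq' t$ still implies $\vee(s,t) = t$ is a supremum under the narrower relation. For $\mathcal{P}'$, since $s \in S'$ iff $s^* \in S'$, each $P \cap S'$ orients $S'$; consistency and the profile property descend immediately from $P$ because $\leq'$ is a restriction of $\leq$ and $\vee'$ is defined only where $\vee$ is.

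For orderliness, given $P', Q' \in \mathcal{P}'$ with $s, t \in P'$ and $s^*, t^* \in Q'$, I lift to $P, Q \in \mathcal{P}$ and apply orderliness of $S$ to obtain $s \vee t, s^* \vee t^* \in S$, with $s \vee t \in P$ and $(s \vee t)^* \in Q$ by the profile property. Non-degeneracy of $s \vee t$ follows from $(s \vee t)^* \notin P$. A co-triviality witness $u$ for $s \vee t$ would give $u, u^* < s \vee t$ (strict by non-degeneracy), and since $s \vee t \in P$ consistency of $P$ forces both $u$ and $u^*$ out of $P$, contradicting that $P$ orients $u$; triviality of $s \vee t$ is the symmetric argument applied to $(s \vee t)^* \in Q$. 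Hence $s \vee t \in S'$, and by symmetry $s^* \vee t^* \in S'$. That $s \vee t$ is still the supremum under $\leq'$ is verified by a parallel case analysis: an upper bound $r$ with $s \vee t = r^*$ would put $r^* \in P$, so $r \in \{s,t\}$ is ruled out by orientation, $r \in \{s^*, t^*\}$ by the defining inequalities of $\leq'$, and the remaining case $r \notin \{s, s^*, t, t^*\}$ makes $s$ trivial via $r$.

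For closedness, a $\leq'$-chain $C$ in $P \cap S'$ is automatically a $\leq$-chain in $P$, since $P$ as an orientation cannot contain both $c$ and $c^*$. Closedness of $P$ then yields a $\leq$-supremum $s \in P$, and I argue $s \in S'$: non-degeneracy holds because $s \in P$; a trivial witness $r$ for $s$ satisfies $c \leq r, r^*$ for every $c \in C$, forcing $r \in \{c, c^*\}$ (else $c \in S'$ would be trivial), so $C \subseteq \{r, r^*\}$, and since $P$ orients $r$ the chain collapses to $\{r\}$, giving the contradiction $s = r$; a co-trivial witness $u$ gives $u, u^* < s$ strictly, and consistency of $P$ excludes both $u$ and $u^*$ from $P$, contradicting orientation. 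That $s$ remains the supremum under $\leq'$ follows from the same $s = r^*$ ruling-out argument as in the previous paragraph. The main obstacle throughout is the recurring need to exclude coincidences $s = r^*$ in supremum arguments, but every instance reduces to the same trick of forcing triviality, co-triviality, or degeneracy of an element of $S'$, which is impossible by construction.
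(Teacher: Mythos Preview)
Your proposal is correct and follows essentially the same approach as the paper. The only differences are cosmetic: the paper defers the verification that $(S',\leq',{*}')$ is a regular separation system to \cite{ASS}, and its non-triviality/non-co-triviality arguments for $s\vee t$ and for the chain supremum are phrased via the observation that a strict predecessor of a trivial or degenerate element is itself trivial, rather than via your direct consistency-of-$P$ argument, but the structure and ideas are the same.
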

\begin{proof}
	It has already been shown in \cite{ASS} that $(S',\leq', \mathord{^*}')$ is a regular separation system, and $\vee'$ is clearly a corner map of that separation system.

	Let $P\in \mathcal{P}$.
	By the definition of profile of a separation system with corner map, $P\cap S'$ is a profile of $(S',\leq', \mathord{^*}', \vee')$.
	We will now show that if $P$ is closed with respect to $\leq$, then $P\cap S'$ is closed with respect to $\leq'$.
	So assume $P$ is closed with respect to $\leq$ and let $(s_i)_{i\in I}$ be a $\leq'$-chain of elements of $P\cap S'$.
	Then $(s_i)_{i\in I}$ is also a $\leq$-chain of elements of $P$, so it has a supremum $s$ with respect to $\leq$ and that supremum is contained in $P$.
	If $s=s_i$ for some index $i$, then $s_i$ is contained in $P\cap S'$ and is the supremum of the $s_i$ with respect to $\leq'$.
	We will show that $s$ is also the supremum of the $s_i$ with respect to $\leq'$ in the case that $s\neq s_i$ for all indices $i$.
	For that, let $i$ be some index.
	As $s_i < s$, and $s_i$ is contained in $S'$ and thus neither trivial nor degenerate, also $s$ is neither trivial nor degenerate.
	Furthermore, $s$ is contained in $P$ and hence cannot be cotrivial.
	Thus $s$ is contained in $S'$ and hence $s\in P\cap S'$.
	In order to show that $s_i\leq' s$, let $j$ be another index such that $s_i<' s_j$.
	Then $s_i<s_j<s$, and as $s_j$ is not trivial, $s_i\neq s^*$.
	Thus $s_i\leq' s$.
	Let $s'$ be another upper bound of the $s_i$ with respect to $\leq'$.
	Then $s_i<s<s'$, and as $s_i$ is not trivial, $s^*\neq s'$.
	Hence $s\leq' s'$, and $s$ is indeed the supremum of the $s_i$ with respect to $\leq'$.

	We will now show that if $S$ is orderly with respect to $\mathcal{P}$, then $S'$ is orderly with respect to $\mathcal{P}'$.
	So let $s$ and $t$ be elements of $S'$ and let $P$ and $Q$ be elements of $\mathcal{P}'$ such that $P\cap S'$ contains both $s$ and $t$ and $Q$ contains both $s^*$ and $t^*$.
	We will only show that $s\vee' t$ is defined, the fact that $s^*\vee' t^*$ is also defined then follows from swapping the roles of $P$ and $Q$.
	As $S$ is orderly with respect to $\mathcal{P}$, $s\vee t$ is defined and thus contained in $P$.
	In particular $s\vee t$ is not co-trivial.
	Consider the case that $s\vee t = s$.
	In that case $t\leq s$.
	As both $s$ and $t$ are contained in $P$ and are non-degenerate, $t\neq s^*$ and thus $t\leq' s$.
	Hence $s\vee' t$ is defined and we are done.
	Similarly if $s \vee t = t$ then we are done.
	So we may assume that $s < s\vee t$ and $t < s \vee t$.
	As $s$ is not trivial, $s\vee t$ is neither degenerate nor trivial, and thus $s\vee t$ is contained in $P \cap S'$.
	Again, as $s$ and $s\vee t$ are both contained in $P$ and non-degenerate, $s\neq (s\vee t)^*$ and thus $s\leq ' s\vee t$.
	Similarly $t \leq' s\vee t$.
	Finally, let $p$ be another upper bound of $s$ and $t$ in $\leq'$ with $s\vee t\neq p$.
	Then $s< s\vee t <p$, so if $s\vee t = p^*$ then $s$ is trivial.
	But $s$ is not trivial, and thus $s\vee t \neq p^*$ so $s\vee t \leq' p$.
	Hence $s\vee t$ is the supremum of $s$ and $t$ with respect to $\leq'$ and so $s\vee' t$ is defined.
\end{proof}

So we have shown that we can make a separation system with corner map and a set of profiles $\mathcal{P}$ regular.
Now we have to show that if we find a tree set of the regularization that distinguishes $\mathcal{P}'$, then we can translate it back to a nested subset of the original separation system distinguishing $\mathcal{P}$.

\begin{lem}\label{TreesetinRegularization}
	Let $(S,\leq, \mathord{^*}, \vee)$ be a separation system with a corner map and let $\mathcal{P}$ be a set of profiles of that separation system.
	Denote the regularization of the essential core of the separation system by $(S', \leq', \mathord{^*}', \vee')$.
	If $T$ is a tree set contained in $S'$ distinguishing $\{P\cap S'\colon P\in \mathcal{P}\}$, then $T$ is also a tree set of $S$ and distinguishes $\mathcal{P}$.
\end{lem}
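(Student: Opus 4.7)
The plan is to split the lemma into its two assertions and verify each directly from the definitions.

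For the claim that $T$ is a tree set of $S$: since $T \subseteq S'$, every element of $T$ is non-degenerate, non-trivial, and non-co-trivial in $S$ by the construction of the essential core. For nestedness, recall that $\leq'$ is obtained from $\leq$ by removing only the relations of the form $s \leq t$ with $s = t^*$, so $s \leq' t$ implies $s \leq t$. Hence any nesting relation between elements of $T$ in $(S', \leq')$ yields a corresponding nesting relation in $(S, \leq)$, and $T$ remains nested in $S$.

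For the claim that $T$ distinguishes $\mathcal{P}$, the key observation is that the passage $P \mapsto P \cap S'$ loses no distinguishing information between profiles of $S$. Any trivial $s \in S$ lies in every profile: picking a witness $t$ with $s < t$ and $s < t^*$, consistency forces $s \in P$ regardless of which of $t$ and $t^*$ lies in $P$. Dually, every co-trivial separation lies outside every profile. Furthermore $S$ has no degenerate separations at all, for otherwise $s \leq s$ forces $s \vee s$ to be defined and equal to $s = s^*$, so $\mathcal{P}$ would be empty and the statement is vacuous. Hence any two distinct profiles of $S$ must disagree on some separation of $S'$, so $P \neq Q$ in $\mathcal{P}$ implies $P \cap S' \neq Q \cap S'$. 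The hypothesis on $T$ then supplies $t \in T \subseteq S'$ distinguishing $P \cap S'$ from $Q \cap S'$, and the same $t$ distinguishes $P$ from $Q$ in $S$.

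The only real subtlety is this preliminary observation that all profiles of $S$ orient the trivial and co-trivial separations in the same way; once that is in place, both parts of the lemma follow from unwinding the definitions.
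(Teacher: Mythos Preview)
Your proof is correct and follows essentially the same route as the paper's: both arguments hinge on the observation that trivial and degenerate separations are oriented the same way by every profile, so that the restriction $P \mapsto P \cap S'$ is injective on~$\mathcal{P}$ and any $t \in T$ distinguishing $P \cap S'$ from $Q \cap S'$ also distinguishes $P$ from~$Q$. You supply more detail than the paper does---in particular you spell out why $T$ remains nested and free of trivial elements in $(S,\leq)$, which the paper simply asserts---but the structure and key idea are the same.
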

\begin{proof}
	Let $T$ be a tree set contained in $S'$ that distinguishes $\{P\cap S'\colon P\in \mathcal{P}\}$.
	Then $T$ is also a nested subset of $S$ and a tree set of $\leq$.
	Every profile contains every trivial and degenerate separation, so every separation in $S$ distinguishing two profiles in $\mathcal{P}$ is also contained in $S'$.
	Hence, if $P$ and $Q$ are distinct elements of $\mathcal{P}'$, then $P\cap S'$ and $Q\cap S'$ are distinct profiles of $S'$ and thus $P\cap S'$ and $Q\cap S'$ are distinguished by $T$.
	So $T$ distinguishes all elements of $\mathcal{P}$.
\end{proof}

Let us now summarise the results of this section so far.

\begin{lem}\label{regularizationsum}
	Let $(S,\leq,\mathord{^*}, \vee)$ be a separation system with corner map, and let $\mathcal{P}$ be a set of profiles of this separation system.
	Then there is a regular separation system with corner map $(S',\leq'\mathord{^*}', \vee')$ and set of profiles $\mathcal{P}'$ such that every tree set distinguishing $\mathcal{P}'$ in $S'$ also distinguishes $\mathcal{P}$ in $S$.
	Moreover, if $S$ is orderly with respect to $\mathcal{P}$ then $S'$ is orderly with respect to $\mathcal{P}'$ and if the elements of $\mathcal{P}$ are closed with respect to $\leq$ then the elements of $\mathcal{P}'$ are closed with respect to $\leq'$.
\end{lem}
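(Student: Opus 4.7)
The plan is to make this essentially a packaging lemma: define $(S', \leq', \mathord{^*}', \vee')$ to be the regularization of the essential core of $(S, \leq, \mathord{^*}, \vee)$ as constructed in \cref{regularization}, and take $\mathcal{P}' := \{P \cap S' \colon P \in \mathcal{P}\}$. With these choices the two substantive statements needed are already proven in the section, so the proof will consist of invoking the earlier lemmas in order.

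First I would apply \cref{regularization} to the given separation system with corner map and its set of profiles. This directly yields that $(S', \leq', \mathord{^*}', \vee')$ is a regular separation system with corner map, that $\mathcal{P}'$ is a set of profiles of it, that orderliness of $S$ with respect to $\mathcal{P}$ passes to orderliness of $S'$ with respect to $\mathcal{P}'$, and that closedness of the elements of $\mathcal{P}$ with respect to $\leq$ passes to closedness of the elements of $\mathcal{P}'$ with respect to $\leq'$. This already delivers every clause of the lemma except the tree-set claim.

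For the remaining claim, given any tree set $T \subseteq S'$ that distinguishes $\mathcal{P}'$, I would simply cite \cref{TreesetinRegularization}, which states exactly that such a $T$ is also a tree set in $S$ and distinguishes $\mathcal{P}$. Combining the two invocations finishes the proof.

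There is no real obstacle here: the content of the lemma was precisely split into \cref{regularization} (preservation of structural properties under regularization) and \cref{TreesetinRegularization} (transport of distinguishing tree sets back through the regularization), so the proof is essentially a one-line concatenation of those two results, together with the explicit specification of $S'$ and $\mathcal{P}'$.
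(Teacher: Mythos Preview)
Your proposal is correct and matches the paper's intent exactly: the lemma is introduced as a summary of the section, with no separate proof given, precisely because it is the concatenation of \cref{regularization} and \cref{TreesetinRegularization} applied to the regularization of the essential core with $\mathcal{P}' = \{P \cap S' : P \in \mathcal{P}\}$.
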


So now we can show that in \cref{abstracttheorem} the regularity condition can be dropped.
Note that in the proof, the only properties of the universe surrounding $S$ that is used are encoded by the properties of the induced corner map.
Therefore, \cref{abstracttheorem} also holds (with the same proof) if $S$ is not a subsystem of a universe but instead has a corner map.

So in particular we can prove the following generalisation of \cref{abstracttheorem}.

\begin{cor}
	Let $(S,\leq,\mathord{^*}, \vee)$ be a separation system with a corner map that is orderly with respect to a set $\mathcal{P}$ of closed profiles. Then there is a tree set $T$ with the following properties:
	\begin{enumerate}
		\item{Any two different elements $\mathcal{P}$ are distinguished by some element of $T$.}
		\item{Any element of $T$ distinguishes some elements of $\mathcal{P}$.}
		\item{In the set $P \cap T$ for every $P \in \mathcal{P}$ every separation lies below some maximal separation.}
	\end{enumerate}
\end{cor}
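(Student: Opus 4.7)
The plan is to reduce to \cref{abstracttheorem} (in its corner-map version, which the authors explicitly note holds with the same proof) by first making the separation system regular, and then verifying that the resulting tree set in the regularization still witnesses the three desired properties back in the original system.

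First I would apply \cref{regularizationsum} to obtain a regular separation system with corner map $(S',\leq',\mathord{^*}',\vee')$ together with a set of profiles $\mathcal{P}'=\{P\cap S'\colon P\in\mathcal{P}\}$. By that lemma, $S'$ remains orderly with respect to $\mathcal{P}'$ and every element of $\mathcal{P}'$ is closed with respect to $\leq'$. Thus the hypotheses of the corner-map version of \cref{abstracttheorem} are satisfied for $(S',\mathcal{P}')$, and I would invoke it to produce a tree set $T\subseteq S'$ satisfying properties (1), (2), (3) relative to $\mathcal{P}'$.

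It then remains to transfer the three properties back to $\mathcal{P}$ in $S$. Property (1) is immediate from \cref{TreesetinRegularization}: the same $T$ is a tree set in $S$ and distinguishes $\mathcal{P}$. For property (2), if some $t\in T$ distinguishes $P\cap S'$ from $Q\cap S'$ in $\mathcal{P}'$, then, since $t\in S'$ and these intersections are precisely the restrictions of $P$ and $Q$ to $S'$, the same $t$ distinguishes $P$ and $Q$ in $\mathcal{P}$.

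The step requiring a little care, and the one I would flag as the main obstacle, is property (3): the ``lies below some maximal element'' statement is about the order structure on $P\cap T$, and a priori the maximal elements with respect to $\leq'$ could differ from those with respect to $\leq$, since $\leq'$ drops precisely the relations of the form $s\leq s^*$. However, every $s\in P\cap T$ lies in $S'$ and is therefore non-degenerate, non-trivial, and non-cotrivial; moreover, for any two $s,t\in P\cap T$, consistency of the orientation $P$ forbids $s\leq t^*$, so in particular $s\neq t^*$. Hence on $P\cap T$ the relations $\leq$ and $\leq'$ agree, and maximal elements and ``lies below some maximal element'' coincide in the two orders. This hands us property (3) for $P\cap T$ in $(S,\leq)$ from the corresponding property in $(S',\leq')$, completing the reduction.
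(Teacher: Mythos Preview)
Your proposal is correct and follows the same approach as the paper: pass to the regularization via \cref{regularization}/\cref{regularizationsum}, apply the corner-map version of \cref{abstracttheorem} there, and pull the tree set back using \cref{TreesetinRegularization}. The paper's own proof is terser and only spells out property~(1), leaving (2) and (3) implicit; your explicit verification of these is a welcome addition. One small remark on your argument for~(3): the key fact that $\leq$ and $\leq'$ agree on $P\cap T$ really only needs that $P$ is an \emph{orientation} of non-degenerate separations (so $s=t^*$ is impossible for $s,t\in P\cap T$), not consistency per se; your appeal to consistency to forbid $s\leq t^*$ is slightly stronger than necessary and has the edge case $s=t$ small, but since only $s\neq t^*$ is required, the conclusion stands.
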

\begin{proof}
	By \cref{regularization}, the regularization $(S', \leq', \mathord{^*}', \vee')$ is orderly with respect to the set of profiles $\mathcal{P}' := \{P \cap S' \colon P \in \mathcal{P}\}$ and $\mathcal{P}'$ is closed in $S'$.
	We can now obtain by \cref{abstracttheorem} a tree set $T$ in $S'$ distinguishing the elements of $\mathcal{P}'$, which is a subset of $S$ as required in this Corollary by \cref{TreesetinRegularization}.
\end{proof}

\section{A tangle-tree theorem for submodular separation systems}\label{sec:ttt}
In this section, we will prove the following theorem.
Recall that a profile of a submodular universe is defined to be a $k$-profile of the universe for some $k\in \mathbb{N}$.
\begin{thm}[main theorem]\label{main}
	Let $U$ be a submodular universe and let $\mathcal{P}$ be a collection of distinguishable regular robust closed profiles. Then there is a tree set $T$ that efficiently distinguishes $\mathcal{P}$ such that every $t\in T$ efficiently distinguishes two profiles in $\mathcal{P}$.
\end{thm}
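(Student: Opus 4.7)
The plan is to prove \cref{main} by induction on $k \in \mathbb{N}$, constructing at each stage a nested set $T_k \subseteq U$ of separations of order less than $k$ satisfying: (i) every pair of profiles in $\mathcal{P}$ with an efficient distinguisher of order less than $k$ is efficiently distinguished by an element of $T_k$; (ii) every $t \in T_k$ efficiently distinguishes two profiles of $\mathcal{P}$; and (iii) $T_{k-1} \subseteq T_k$. Taking $T := \bigcup_k T_k$ then produces the desired tree set, since any two distinguishable profiles have an efficient distinguisher of some finite order, which is therefore handled at the corresponding stage; nestedness and condition (ii) both pass to the union. The base case $T_0 = \emptyset$ is trivial.

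For the inductive step, let $E_k \subseteq S_k$ denote the set of $<k$-separations that efficiently distinguish some pair of profiles in $\mathcal{P}$, regarded as a separation system with the corner map inherited from $U$ (so $s \vee t$ is defined in $E_k$ exactly when the corner $s \vee t$ formed in $U$ again lies in $E_k$). I would then apply the corollary at the end of \cref{sec:nonregular} to $E_k$ together with the restricted profiles $\{P \cap E_k : P \in \mathcal{P},\ \mathrm{order}(P) \geq k\}$. Closedness of the restricted profiles is essentially inherited: a chain of efficient distinguishers inside a closed profile $P$ has a supremum in $P$ by closedness of $P$ in $U$, and this supremum itself efficiently distinguishes every pair that some chain member distinguished, so it lies in $E_k$. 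The resulting tree set inside $E_k$ is then combined with $T_{k-1}$, replacing any newly introduced separation that crosses an element of $T_{k-1}$ by the appropriate corner—which remains an efficient distinguisher by robustness—to produce $T_k$.

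The main obstacle is verifying orderliness of $E_k$: given $s, t \in E_k$ with $\{s,t\} \subseteq P$ and $\{s^*, t^*\} \subseteq Q$ for some $P, Q \in \mathcal{P}$, one must show that both $s \vee t$ and $s^* \vee t^*$ are themselves efficient distinguishers of some pair in $\mathcal{P}$ and hence lie in $E_k$. This is the step where the robustness hypothesis enters, through \cite[Lemma 3.6]{ProfilesNew}, which ensures that some corner of two efficient distinguishers again efficiently distinguishes one of the given pairs. By submodularity the orders of $s \vee t$ and $s^* \vee t^*$ sum to at most $\mathrm{order}(s) + \mathrm{order}(t)$, and the profile property forces $s \vee t \in P$ and $s^* \vee t^* \in Q$, so these corners at least distinguish $P$ from $Q$; the delicate part is arguing that the specific corners in question—and not merely some corner out of the four—efficiently distinguish a pair in $\mathcal{P}$. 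A secondary obstacle is the final merging of the tree set produced by the corollary with $T_{k-1}$ while preserving nestedness and efficiency simultaneously, which requires another careful application of robustness at the boundary between levels.
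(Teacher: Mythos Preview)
Your overall architecture—build $T_k$ by induction on $k$ and take $T=\bigcup_k T_k$—matches the paper's, but the inductive step is organised quite differently, and the paper's organisation is precisely what dissolves the two obstacles you identify. Having built $T_k$, the paper does \emph{not} apply \cref{abstracttheorem} to all of $E_k$ at once. Instead, for each $k$-profile $P$ induced by $\mathcal{P}$ it lets $N_P$ be the maximal elements of $P\cap T_k$, restricts to the sub-universe $U_P$ of separations toward which every element of $N_P$ points, and applies \cref{abstracttheorem} only to the separations of order exactly $k$ in $U_P$ and only to the set $\mathcal{Q}_P$ of $(k{+}1)$-profiles extending $P$. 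Orderliness then becomes almost trivial (\cref{orderly}): since all profiles in $\mathcal{Q}_P$ share the same induced $k$-profile, submodularity plus a short profile argument forces the relevant corners to again have order $k$; one never needs them to be efficient distinguishers of anything. And the merging problem vanishes, because every separation produced already lies in $U_P$ and is therefore nested with all of $T_k$ by construction. Robustness enters not for orderliness but in \cref{mayrestricttoU_P}, to show that restricting to $U_P$ does not lose distinguishing power.

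In your global scheme both obstacles are genuine gaps, not details to be filled in. Orderliness of $E_k$ would require that both $s\vee t$ and $s\wedge t$ again lie in $E_k$, i.e.\ are efficient distinguishers of some pair in $\mathcal{P}$; \cite[Lemma~3.6]{ProfilesNew} only ever hands you one of the four corners, and there is no mechanism forcing the particular corner you need to be efficient for any pair. The merging step is also structurally broken: replacing individual elements of the new tree set by corners so as to nest them with $T_{k-1}$ will in general destroy nestedness \emph{among} those elements, since different separations get pushed to different corners. There is a further gap in your closedness claim: the supremum $s$ of a chain in $P\cap E_k$ lies in $P$ by closedness of $P$, but $P$ may have order far above $k$, so nothing bounds the order of $s$ below $k$, let alone guarantees $s\in E_k$.
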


\begin{rem}
	The tree set $T$ constructed in the proof of \cref{main} additionally has the property that for every $P\in \Pcal$, every element of $T\cap P$ is less than or equal to a maximal element of $T\cap P$.
\end{rem}

We are going to recursively construct tree sets $T_k$ whose union then efficiently distinguishes $\mathcal{P}$.
To do this, let $\mathcal{P}_k$ be the set of profiles of order at most $k$ that are induced by elements of $\mathcal{P}$.
For the construction of a tree set $T_{k+1}$ distinguishing the elements of $\mathcal{P}_{k+1}$ we will employ the following proof strategy in two steps, that is common in proofs of tree-of-tangles-theorems, for example in \cite{ProfilesNew}.
First, for a $k$-profile $P\in \mathcal{P}_k$, \cref{abstracttheorem} is applied to the set of all $k+1$-profiles in $\mathcal{P}_{k+1}$ whose induced $k$-profile is $P$ and to a carefully chosen subuniverse of $U$ to obtain a tree set $T_P$.
Second, it will be shown that $T_k$ together with all the tree sets $T_P$ is a tree set that efficiently distinguishes $\mathcal{P}_{k+1}$.

More precisely, we are going to show by induction that for every $k\in \mathbb{N}$ there is a tree set $T_k$ with the following properties:
\begin{itemize}
	\item $T_k$ is a subset of $S_k$, and if $T_{k-1}$ exists then $T_{k-1} \subseteq T_k$.
	\item Every element of $T_k$ distinguishes two profiles in $\mathcal{P}_k$ efficiently.
	\item $T_k$ distinguishes $\mathcal{P}_k$ efficiently.
	\item For every $Q\in \mathcal{P}_k$, every element of $Q\cap T_k$ is less than or equal to a maximal element of $Q\cap T_k$.
\end{itemize}

As the only $0$-profile is the empty set we define $T_0$ to be the empty set.
Assume that $T_k$ is already defined and we now want to define $T_{k+1}$.
For each $k$-profile $P\in\mathcal{P}_k$ let \QP\ be the set of all $(k+1)$-profiles in $\mathcal{P}_{k+1}$ whose induced $k$-profile is $P$, $N_P$ the set of maximal elements of $P\cap T_k$ and $U_P$ the set of all separations in $U$ towards which all separations of $N_P$ point.
Note that $U_P$ is a universe and closed under taking infima and suprema of chains of bounded order, and that as a result the set of $k$-separations of $U_P$ is structurally submodular and closed under taking suprema of chains of bounded order.
Then every profile in \QP\ induces a closed $(k+1)$-profile of $U_P$.

We will want to apply the results of the previous section, in particular \cref{abstracttheorem}, to $U_P$ and the profiles of $U_P$ induced by $\QP$.
In order to do so, we need to show that the separation system $S_k$ of $U_P$ is orderly with respect to $\QP$.
That follows from the following, slightly more general statement.

\begin{lem}\label{orderly}
	Let $U$ be a submodular universe, $k$ a non-negative integer and $\mathcal{P}$ a set of $k+1$-profiles that all have the same induced $k$-profile.
	Then the set of $k$-separations of $U$ is orderly with respect to $\mathcal{P}$.
\end{lem}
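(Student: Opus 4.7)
The plan is to exploit submodularity of the order function together with the fact that every profile in $\Pcal$ induces the same $k$-profile. Write $P_0$ for this common induced $k$-profile, so that $P\cap S_k = Q \cap S_k = P_0$ for any $P,Q \in \Pcal$. Suppose $s,t$ are $k$-separations, and let $P,Q \in \Pcal$ contain $\{s,t\}$ and $\{s^*,t^*\}$ respectively. The goal is to show $\order{s\vee t}=\order{s \wedge t}=k$, so that both $s\vee t$ and $s\wedge t$ are again $k$-separations. Submodularity already gives $\order{s\vee t}+\order{s\wedge t}\leq 2k$, so the task splits into proving matching upper and lower bounds.

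For the upper bound $\order{s\vee t}\leq k$, I argue by contradiction: if $\order{s\vee t}\geq k+1$, then submodularity yields $\order{s\wedge t}\leq k-1$, so $s\wedge t\in S_k$. Since $s\wedge t\leq s$ and $s\in P$, consistency of $P$ puts $s\wedge t\in P$, and hence $s\wedge t\in P\cap S_k = P_0 \subseteq Q$. On the other hand, $s^*,t^*\in Q$ and $Q$ orients $s^*\vee t^*=(s\wedge t)^*$, so the profile property of $Q$ forces $(s^*\vee t^*)^*=s\wedge t\notin Q$, a contradiction. Swapping the roles of $P$ and $Q$ and applying the same argument to $s^*,t^*$ in $Q$ yields $\order{s\wedge t}\leq k$.

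For the lower bound $\order{s\vee t}\geq k$, suppose $\order{s\vee t}<k$, so that $s\vee t\in S_k$. The profile property of $P$ applied to $s,t\in P$ (together with the fact that $P$ orients $s\vee t$) gives $s\vee t\in P$, hence $s\vee t\in P_0 \subseteq Q$. But $s^*\in Q$ and $s\leq s\vee t$, so consistency of $Q$ implies $s\vee t\notin Q$, a contradiction. The analogous argument played out inside $Q$ with $s^*,t^*$ produces $\order{s\wedge t}\geq k$.

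The only real subtlety is that orderliness demands \emph{exact} equality $\order{s\vee t}=\order{s\wedge t}=k$, not merely the one-sided bound provided by submodularity; bridging this gap requires using the shared induced $k$-profile $P_0$ to shuttle the relevant separation between $P$ and $Q$, and then playing consistency off against the profile property. Once both bounds are established, $s\vee t$ and $s\wedge t$ lie in the set of $k$-separations, which is precisely what orderliness requires.
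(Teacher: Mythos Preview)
Your proof is correct and uses the same core ideas as the paper's: submodularity, the profile property, consistency, and the shared induced $k$-profile to shuttle a separation between $P$ and $Q$. You organize the argument as four separate inequalities rather than the paper's case analysis on whether $s\vee t\in Q$, but the substance is the same; if anything, your version is more explicit about why $s\wedge t$ has order exactly $k$ rather than merely at most $k$.
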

\begin{proof}
	Assume $s$ and $t$ are $k$-separations of $U$ and $P$ and $Q$ are elements of $\mathcal{P}$ such that $\{s,t\}\subseteq P$ and $\{s^*,t^*\}\subseteq Q$.
	By submodularity, one of $s\vee t$ and $s\wedge t$ is also a $k$-separation, assume without loss of generality that $s\vee t$ is a $k$-separation.
	Then $s\vee t$ is also contained in $P$.

	First consider the case that $s\vee t$ is not contained in $Q$.
	Then $s\vee t$ distinguishes $P$ and $Q$, and thus has order exactly $k$.
	Hence by submodularity also $s\wedge t$ is a $k$-separation and we are done.

	So consider the case that $s\vee t\in Q$.
	As a separation system with a degenerate separation does not have a profile, $s$ is not contained in $Q$.
	Thus by consistency $s\leq s\vee t\in Q$ implies $s=(s\vee t)^*$.
	Similarly $t=(s\vee t)^*$, so $s=t$ and the lemma holds.
\end{proof}

Now we will show that indeed all profiles in $\QP$ can be distinguished by separations of $U_P$.

\begin{lem}\label{mayrestricttoU_P}
	All profiles in \QP\ are distinguished by separations of $U_P$ of order $k$.
\end{lem}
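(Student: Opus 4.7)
The plan is to start with an efficient distinguisher $r$ of two distinct profiles $P_1, P_2 \in \QP$ and then shift $r$ past each element of $N_P$ to land in $U_P$, using the robustness consequence from \cite{ProfilesNew} and the closedness of the profiles.

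Since the induced $k$-profiles of $P_1$ and $P_2$ both equal $P$, any separation distinguishing them has order exactly $k$. By submodularity and the distinguishability assumption, an efficient such distinguisher $r$ exists; orient it so that $r \in P_1$ and $r^* \in P_2$. Note that $N_P \subseteq P \subseteq P_1 \cap P_2$.

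For any $n \in N_P$, the inductive hypothesis gives that $n$ efficiently distinguishes two profiles in $\mathcal{P}_k$, and these are robust (inherited from $\mathcal{P}$). Applying \cite[Lemma 3.6]{ProfilesNew} to the efficient distinguishers $n$ (of order less than $k$) and $r$ (of order $k$) yields an efficient distinguisher of $P_1, P_2$ among the corners $r \wedge n$, $r \wedge n^*$, $r^* \wedge n$, $r^* \wedge n^*$. A case analysis using $n \in P_1 \cap P_2$ together with $r \in P_1$, $r^* \in P_2$ and consistency (the corners containing $n^*$ have one orientation ruled out in $P_1$ by $n^* \notin P_1$ and the other in $P_2$ by $n^* \notin P_2$, so they cannot distinguish) shows that the efficient distinguisher is one of $r \wedge n$ or $r^* \wedge n$, which has order $k$ and is nested with $n$.

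To obtain a distinguisher nested with \emph{every} $n \in N_P$ simultaneously, I would iterate this shift. The elements of $N_P$ are pairwise nested and form a star in the tree set $T_k$, which lets consecutive uncrossings preserve nestedness already achieved: once a distinguisher is nested with $n$, the lattice operations together with submodularity of $U$ allow subsequent uncrossings with other $n' \in N_P$ to be chosen so as to remain nested with $n$. For finite $N_P$ this is a finite iteration. For infinite $N_P$, the closed-profile hypothesis, together with the fact that $U_P$ is closed under suprema of chains of bounded order (noted in the paragraph introducing $U_P$), supports a transfinite induction along a well-ordering of $N_P$: at limit stages the supremum of the partial distinguishers in $P_1$ exists in $P_1$ by closedness, stays of order $k$, and still distinguishes $P_1$ from $P_2$ since its inverse is the infimum of separations each lying in $P_2$. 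The main obstacle is verifying that the order bound $k$ and the efficient distinguishing property survive each limit stage, and that the final separation is simultaneously nested with every element of $N_P$.
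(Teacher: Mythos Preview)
Your single--step case analysis is inverted. With $n\in P_1\cap P_2$, $r\in P_1$, $r^*\in P_2$, the corners you keep, $n\wedge r$ and $n\wedge r^*$, are precisely the ones that \emph{cannot} distinguish $P_1$ and $P_2$: the inverse of $n\wedge r$ is $n^*\vee r^*$, and if this lay in $P_2$ then $n^*<n^*\vee r^*$ together with $n\in P_2$ would violate consistency (assuming $n\not\leq r$); symmetrically $n^*\vee r\notin P_1$. The corners that \emph{can} distinguish are $n^*\wedge r$ and $n^*\wedge r^*$, equivalently $n\vee r^*$ and $n\vee r$, and these are the ones with $n$ below them, which is what membership in $U_P$ actually requires. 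Your claim that ``$n^*\notin P_i$'' rules out the corners below $n^*$ is not a consequence of consistency: having $n$ and $n^*\wedge r$ together in a profile is perfectly consistent.

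Even after correcting this, the transfinite iteration does not go through. With the right corners, the new distinguisher in its $P_1$-orientation is either $n\vee r\geq r$ or $(n\vee r^*)^*=n^*\wedge r\leq r$, so successive uncrossings do not produce a chain in $P_1$, and closedness gives you nothing at limit ordinals. Your own ``main obstacle'' is therefore not merely a verification to be filled in but a genuine failure of the method. Likewise, the assertion that uncrossing with a later $n'$ preserves the relation $n\leq c$ already achieved relies on always being able to pick the corner $\geq c$, which you cannot control. The paper sidesteps all of this: working in the equivalence class $X=f_{\QP}^{-1}(f_{\QP}(r))$, it first takes the \emph{smallest} element $s\in X$ (via \cref{biggestequivalent}), records the set $N'=\{n\in N_P:n\leq s^*\}$, and then takes a \emph{maximal} element $t$ of $\{x\in X:n\leq x^*\text{ for all }n\in N'\}$ using closedness and Zorn. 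A direct argument with robustness then shows every $n\in N_P$ points towards $t$. This min--then--max construction replaces your iteration and handles infinite $N_P$ in one stroke.
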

\begin{proof}
	Let $f_{\QP}:U \rightarrow 2^{\QP}$ be defined as \cref{sec:equivclasses}.
	It suffices to show that if $r$ is a separation such that $f_{\QP}(r)$ is neither $\emptyset$ nor $\QP$, then there is an element of $U_P$ that has the same image under $f_{\QP}$ as $r$.
	Let $X$ be the set of separations whose image under $f_{\QP}$ is $f_{\QP}(r)$.
	Several times in this proof we will use the fact that $f_{\QP}$ preserves infima and suprema, and in particular any two elements of $X$ have an infimum and a supremum.

	If $N_P$ is empty, then $r$ itself is a separation of $U_P$ and we are done, so assume otherwise.
	By \cref{biggestequivalent}, $X$ has a smallest element $s$.
	Let $N'$ be the set of all elements $n$ of $N_P$ such that $n\leq s^*$.
	Define $X'$ to contain all elements $x\in X$ with $n\leq x^*$ for all $n\in N'$.
	Just as in the proof of \cref{biggestequivalent}, the fact that the profiles in $\QP$ are closed and Zorn's Lemma imply that $X'$ has a maximal element $t$.
	See \cref{fig:nestedwithNP} for an illustration.

	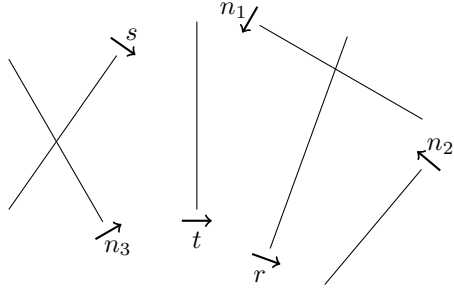
\begin{figure}
		\centering
		\begin{tikzpicture}
			\draw (-2cm,1cm) -- ++(-60:2.5cm) pic [rotate = 30]{separrow = $n_3$};
			\draw (-2cm, -1cm) -- ++(55:2.5cm) pic [rotate = 145] {separrow inv= $s$};
			\draw (0.5cm, 1.5cm) -- ++(-90:2.5cm) pic {separrow = $t$};
			\draw (3.5cm, .2cm) -- ++(150:2.5cm) pic [rotate = 240] {separrow = $n_1$};
			\draw (2.2cm, -2cm) -- ++(50:2cm) pic [rotate = 140] {separrow = $n_2$};
			\draw (2.5cm, 1.3cm) -- ++(-110:3cm) pic [rotate = -20] {separrow = $r$};
		\end{tikzpicture}\caption{The separation $s$ is less than the separations $n_1^*$ and $n_2^*$, which is equivalent to $n_1\leq s^*$ and $n_2\leq s^*$, and the separation $t$ is additionally bigger than $n_3$.}\label{fig:nestedwithNP}
	\end{figure}

	We will show that all elements of $N_P$ point towards $t$.
	In order to do so let $n$ be an element of $N_P$.
	If there is $q\in X$ such that $q\leq n^*$, then $s\wedge q\in X$.
	By minimality of $s$ this implies $s=s\wedge q$, so $s\leq q\leq n^*$ and thus $n\in N'$.
	Hence $n$ points towards~$t$.
	The other case is where there is no such separation $q$, in particular $t\wedge n^*$ is not a candidate for $q$.
	If $t\wedge n^*$ is a $\leq k$-separation, then it is contained in $X$, because $n$ is contained in all elements of $\QP$.
	So the fact that $t \wedge n^*$ is not contained in $X$ implies that it is not a $\leq k$-separation.
	Because $\mathcal{P}$ is robust and consistent, this implies that $t\vee n = (t^* \wedge n^*)^*$ efficiently distinguishes the same elements of $\QP$ that $t$ distinguishes.
	Then $t$ is a $k$-separation and thus contained in $X$, and all $n'\in N'$ point towards $t\vee n$, so by maximality of $t$ in $X'$ we have $t\vee n=t$ and hence $n\leq t$.
	So also in this case $n$ points towards $t$.
\end{proof}

So in order to distinguish the elements of $\mathcal{Q}_P$, it suffices to distinguish their intersections with $U_P$.
We will do that by applying \cref{abstracttheorem} to the $k$-separations of $U_P$ and the profiles induced by $\mathcal{Q}_P$.
Call the obtained tree set $T_P$.
Let $T_{k+1}$ be the union of $T_k$ and all tree sets $T_P$ where $P$ is a $k$-profile contained in $\mathcal{P}_k$.

\begin{lem}
	$T_{k+1}$ is a tree set that distinguishes $\mathcal{P}_{k+1}$ efficiently, and in which every separation distinguishes some elements of $\mathcal{P}_{k+1}$.
\end{lem}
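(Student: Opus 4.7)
The plan is to verify four properties of $T_{k+1}$ in turn: that it is nested, that every element distinguishes some pair of profiles in $\mathcal{P}_{k+1}$, that $T_{k+1}$ distinguishes all pairs in $\mathcal{P}_{k+1}$, and that this distinguishing is efficient. For each, I will case-split on whether the separations involved lie in $T_k$ or in some $T_P$, invoking the inductive hypothesis on $T_k$ together with the properties of $T_P$ guaranteed by \cref{abstracttheorem}.

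For nestedness, two separations both in $T_k$ are nested by induction, and two both in a single $T_P$ are nested by \cref{abstracttheorem}. The genuinely new cases, which I expect to be the main obstacle, are a separation from $T_k$ together with one from $T_P$, and a separation from $T_P$ together with one from $T_{P'}$ with $P\neq P'$. I plan to handle the mixed case using the maximality invariant on $T_k$: for $t\in T_k$ and $s\in T_P$, if $t\in P$ then $t\le n$ for some $n\in N_P$, and since $s\in U_P$ is nested with $n$ in the way encoded by the definition of $U_P$, one concludes that $s$ is nested with $t$ via $n$; the case $t^*\in P$ is symmetric. The $T_P$--$T_{P'}$ case will reduce to this by picking $t\in T_k$ that distinguishes $P$ from $P'$ and observing that $T_P$ and $T_{P'}$ lie on opposite sides of $t$.

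For the second property, every element of $T_k$ distinguishes a pair in $\mathcal{P}_k$ by induction, and this lifts to a pair in $\mathcal{P}_{k+1}$ since $\mathcal{P}_k$ is the set of $k$-profiles induced by elements of $\mathcal{P}$; every element of $T_P$ distinguishes a pair in $\mathcal{Q}_P\subseteq\mathcal{P}_{k+1}$ by \cref{abstracttheorem}. For the third, given $P,Q\in\mathcal{P}_{k+1}$, either their induced $k$-profiles differ and $T_k$ distinguishes them by induction, or the induced $k$-profiles coincide with some $R\in\mathcal{P}_k$, so $P,Q\in\mathcal{Q}_R$ and \cref{mayrestricttoU_P} together with the construction of $T_R$ provides a distinguisher in $T_R\subseteq T_{k+1}$.

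For efficiency, in the first subcase the induced $k$-profiles differ, so some separation of order less than $k$ distinguishes $P$ from $Q$; the minimum-order distinguisher therefore also distinguishes the induced $k$-profiles, and $T_k$ provides one of that order efficiently by induction. In the second subcase no separation of order less than $k$ distinguishes $P$ from $Q$, so any distinguisher has order at least $k$, while the distinguisher supplied by $T_R$ has order exactly $k$ because \cref{abstracttheorem} is applied to the $k$-separations of $U_R$; it is therefore efficient.
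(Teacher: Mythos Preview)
Your proposal is correct and follows essentially the same approach as the paper: the same case split for nestedness via the maximal elements $N_P$, and the same dichotomy for (efficient) distinguishing according to whether the induced $k$-profiles agree. One small point you leave implicit is that ``nested'' plus ``every element distinguishes two profiles'' together yield that $T_{k+1}$ is a \emph{tree set}: a distinguishing separation cannot be small or cosmall, hence not trivial, co-trivial, or degenerate. The paper makes this step explicit; you should too.
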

\begin{proof}
	Let $P$ be a $k$-profile in $\mathcal{P}_k$.
	As for every separation $n$ in $P\cap T_k$ there is an element $n'$ of $N_P$ such that $n\leq n'$, $T_k$ is nested with every separation in $U_P$ and thus with $T_P$.
	Also, if $P$ and $P'$ are distinct $k$-profiles in $\mathcal{P}_k$, then they are distinguished by some separation $n$ of $T_k$, so they are also distinguished by some separation of $N_P$ which then witnesses that every separation in $T_{P'}$ is nested with every separation in $T_P$.
	So $T_k$ is nested.
	Also by construction every element of $T_{k+1}$ distinguishes two elements of $\mathcal{P}_{k+1}$ and thus is neither small nor cosmall.
	Hence every element of $T_{k+1}$ is neither trivial nor co-trivial nor degenerate.

	In order to show that every two elements of $\mathcal{P}_{k+1}$ are distinguished efficiently, let $P$ and $Q$ be two such elements that can be distinguished.
	If $P$ and $Q$ can be distinguished by a separation of order at most $k-1$, then they induce distinguishable elements of $\mathcal{P}_k$ which are thus efficiently distinguished by some separation $t$ of $T_k$.
	Then $t$ also efficiently distinguishes $P$ and $Q$.
	So we are left with the case that $P$ and $Q$ cannot be distinguished by a separation of order less than $k$, which implies that they are both $(k+1)$-profiles and induce the same $k$-profile $P'$.
	But then $P$ and $Q$ are distinguished by a separation in $T_{P'}$, and that separation distinguishes them efficiently.
\end{proof}

So in order to complete the proof of \cref{main}, we only have to prove the following statement.

\begin{lem}
	For every $Q\in \mathcal{P}_{k+1}$, every element of $Q \cap T_{k+1}$ is less than or equal to a maximal element of $Q \cap T_{k+1}$.
\end{lem}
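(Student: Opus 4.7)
Fix $Q \in \mathcal{P}_{k+1}$ and let $P_Q := Q \cap S_k \in \mathcal{P}_k$ be the $k$-profile it induces. The plan is to reduce everything to the tree set $T_{P_Q}$: property~(3) of \cref{abstracttheorem} applied inside the construction of $T_{P_Q}$ already guarantees that every element of $Q \cap T_{P_Q}$ lies below a maximal element of $Q \cap T_{P_Q}$. So it suffices to show (i) that every element of $Q \cap T_{k+1}$ is $\leq$ some element of $Q \cap T_{P_Q}$ or of $N_{P_Q}$, and (ii) that maximal elements of $Q \cap T_{P_Q}$ remain maximal in $Q \cap T_{k+1}$.

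The heart of the argument is the claim $(\star)$: every maximal element $t$ of $Q \cap T_{P_Q}$ is maximal in $Q \cap T_{k+1}$. Suppose not, and fix $v \in Q \cap T_{k+1}$ with $t < v$. The case $v \in T_{P_Q}$ directly contradicts the maximality of $t$. In the remaining cases I produce some $n \in N_{P_Q}$ with $v \leq n$: when $v \in T_k$ this is the inductive hypothesis applied to $P_Q \cap T_k$; when $v \in T_{P'}$ for some $P' \neq P_Q$, I pick $m \in T_k$ distinguishing $P_Q$ from $P'$, say with $m \in P'$ and $m^* \in P_Q$, lift $m$ via the inductive hypothesis to $n' \in N_{P'}$ (so $n' \in P'$ by consistency of $P'$ and hence $n'^* \in P_Q$ by consistency of $P_Q$), and use the nestedness of $v \in U_{P'}$ with $n'$ together with consistency of $Q$ (noting $n'^* \in Q$) to force $v \leq n'^*$; a further application of the inductive hypothesis to $n'^* \in P_Q \cap T_k$ then yields the required $n$. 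Now $t \in U_{P_Q}$ gives $n \leq t$ or $n \leq t^*$; the first contradicts $t < v \leq n$, while the second, combined with $t \leq v \leq n$, forces both $t \leq n$ and $t \leq n^*$, making $t$ small --- impossible since $T_{P_Q}$ is a tree set.

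With $(\star)$ secured, let $s \in Q \cap T_{k+1}$. If $s \in T_{P_Q}$, property~(3) of \cref{abstracttheorem} supplies $t \geq s$ maximal in $Q \cap T_{P_Q}$ and $(\star)$ finishes. If $s \in T_k$ or $s \in T_{P'}$ for some $P' \neq P_Q$, the same steps as in $(\star)$ produce $n \in N_{P_Q}$ with $s \leq n$. Either $n$ is already maximal in $Q \cap T_{k+1}$ and we are done, or there is $v \in Q \cap T_{k+1}$ with $n < v$; any such $v$ cannot lie in $T_k$ or in any $T_{P'}$ with $P' \neq P_Q$ (either would produce an element of $P_Q \cap T_k$ strictly above $n$, contradicting maximality of $n$ in $P_Q \cap T_k$), so $v \in T_{P_Q}$ and applying the previous case to $v$ yields a maximal element of $Q \cap T_{k+1}$ above $s$.

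The main obstacle is the case analysis inside $(\star)$: one must combine the nestedness built into $U_{P_Q}$ with respect to $N_{P_Q}$, the consistency of $Q$, and the fact that $T_k$ and the $T_P$'s live in disjoint orders (so no small or trivial separation can sneak into a tree set), in order to rule out every ``taller'' $v$ coming from a neighbouring stratum.
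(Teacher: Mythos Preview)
Your argument is correct and follows the same route as the paper's proof: reduce to $T_{P_Q}$, use property~(3) of \cref{abstracttheorem} there, and check that maximal elements of $Q\cap T_{P_Q}$ stay maximal in $Q\cap T_{k+1}$. In fact you supply considerably more detail than the paper does---the paper asserts both ``$s\notin T_{Q'}$ implies $s\le n$ for some $n\in N_{Q'}$'' and ``a maximal element of $T_{Q'}\cap Q$ is also maximal in $Q\cap T_{k+1}$'' without proof, while your claim~$(\star)$ and the surrounding case analysis justify these carefully.

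Two small points. First, you silently assume $Q$ has order exactly $k+1$ (so that $P_Q$ is a genuine $k$-profile and $T_{P_Q}$ is defined); the paper disposes of the case $|Q|\le k$ in one line by induction, since then $Q\cap T_{k+1}=Q\cap T_k$. Second, your justification ``$t$ small is impossible since $T_{P_Q}$ is a tree set'' is not quite right: tree sets may contain small separations. The correct reason, available from the construction, is that every element of $T_{P_Q}$ distinguishes two regular profiles in $\mathcal{Q}_{P_Q}$ (property~(2) of \cref{abstracttheorem}), and a small separation lies in every regular profile, hence distinguishes none.
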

\begin{proof}
	Let $Q\in \mathcal{P}_{k+1}$ and let $s\in Q \cap T_{k+1}$.
	By induction it suffices to consider the case that $Q$ has order $k+1$.
	Denote the induced $k$-profile of $Q$ by $Q'$.
	If $s$ is not contained in $T_{Q'}$, then it is less than or equal to an element of $N_{Q'}$, and every element of $N_{Q'}$ is either maximal in $T_{k+1}\cap Q$ or less than a separation in $Q \cap T_{Q'}$.
	So it suffices to consider the case that $s\in T_{Q'}$.
	But in this case by \cref{abstracttheorem} $s$ is less than or equal to a maximal element of $T_{Q'}\cap Q$ which then also is a maximal element of $Q \cap T_{k+1}$.
\end{proof}

\begin{proof}[Proof of \cref{main}]
	Let $T$ be the union of all the $T_k$ as defined in this section.
	Then $T$ is a tree set which distinguishes all distinguishable profiles in $\mathcal{P}$ efficiently, and every separation in it distinguishes two elements of $\mathcal{P}$ efficiently.
\end{proof}

\printbibliography

\end{document}